\newtheorem{thm}{Theorem}[section] 
\newtheorem*{thm*}{Theorem} 
\newtheorem{prop}[thm]{Proposition}
\newtheorem{lem}[thm]{Lemma}
\newtheorem{cor}[thm]{Corollary}
\theoremstyle{definition}
\newtheorem{definition}[thm]{Definition}
\newtheorem{expl}[thm]{Example}
\newtheorem{rem}[thm]{Remark}
\DeclareMathOperator{\C}{\mathbb{C}}
\DeclareMathOperator{\Z}{\mathbb{Z}}
\DeclareMathOperator{\F}{\mathbb{F}}
\DeclareMathOperator{\Q}{\mathbb{Q}}
\DeclareMathOperator{\Gal}{\text{Gal}}
\DeclareMathOperator{\Hom}{{\rm Hom}}
\DeclareMathOperator{\OO}{\mathcal{O}}
    \DeclareFontFamily{U}{wncy}{}
    \DeclareFontShape{U}{wncy}{m}{n}{<->wncyr10}{}
    \DeclareSymbolFont{mcy}{U}{wncy}{m}{n}
    \DeclareMathSymbol{\Sha}{\mathord}{mcy}{"58}
\numberwithin{equation}{section}
\DeclareSymbolFont{bbold}{U}{bbold}{m}{n}
\DeclareSymbolFontAlphabet{\mathbbold}{bbold}
\begin{document}
\title{Integral Cayley graphs \\ over a finite symmetric algebra}

 \author{Tung T. Nguyen, Nguy$\tilde{\text{\^{e}}}$n Duy T\^{a}n }

 \address{Department of Mathematics and Computer Science, Lake Forest College, Lake Forest, Illinois, USA}
 \email{tnguyen@lakeforest.edu}
 
  \address{
Faculty Mathematics and Informatics, Hanoi University of Science and Technology, 1 Dai Co Viet Road, Hanoi, Vietnam } 
\email{tan.nguyenduy@hust.edu.vn}

\thanks{TTN is partially supported by an AMS-Simons Travel Grant.  NDT is partially supported by the Vietnam National
Foundation for Science and Technology Development (NAFOSTED) under grant number 101.04-2023.21}
\keywords{Integral Cayley graphs, symmetric algebras, global fields.}
\subjclass{Primary 11R58, 05E40, 05C50}

\begin{abstract}
A graph is called integral if its eigenvalues are integers. In this article, we provide the necessary and sufficient conditions for a Cayley graph over a finite symmetric algebra $R$ to be integral. This generalizes the work of So who studies the case where $R$ is the ring of integers modulo $n.$ We also explain some number-theoretic constructions of finite symmetric algebras arising from global fields, which we hope could pave the way for future studies on Paley graphs associated with a finite Hecke character.
\end{abstract}
\maketitle
\section{Introduction}

An undirected graph $G$ is said to be integral if all of its eigenvalues are integers. The notion of integral graphs was first introduced by Harary and Schwenk in \cite{harary1974graphs}. In the same article, the authors asked whether one can classify integral graphs. Since then, there has been a vast literature on this topic. We refer interested readers to \cite{balinska2002survey} for a survey about known examples of integral graphs.

While the general question is quite challenging, the situation becomes more manageable when we consider graphs with additional structures, such as Cayley graphs over a finite ring. In this case, we can exploit the interplay between the additive and multiplicative structures of the ring to study the arithmetics of these Cayley graphs. As one might naturally expect, this investigation bridges various fields, including number theory, character theory, and commutative algebra. Below, we provide further details about some old and recent studies for this line of research.

In \cite{klotz2007some}, Klotz and Sander study unitary Cayley graphs over $\Z/n$ and their natural generalization: the gcd-graphs $G_{n}(D)$ where $D$ is a subset of proper divisors of $n$. Let us quickly recall the definition of $G_n(D)$. The vertices of $G_n(D)$ are elements of the finite ring $\Z/n$ and two elements $a,b$ are adjacent if $\gcd(a-b, n) \in D.$ Using the theory of Ramanujan sums, Klotz and Sander show that the gcd-graphs $G_n(D)$ are integral. They also ask whether the converse is also true; namely, if a Cayley graph over $\Z/n$ is integral, is it true that it is a gcd-graph? 
 So (2016) in \cite{so2006integral} provides an affirmative answer to this question. For the reader's convenience, we recall So's theorem (\cite[Theorem 7.1]{so2006integral}) in terms of Cayley graphs over $\Z/n$  as follows. 
\begin{thm*}[So's theorem]
    A undirected Cayley graph $G=\Gamma(\Z/n,S(G))$ is integral if and only if $S(G)$ is a union of the $G_n(d)'s$.
\end{thm*}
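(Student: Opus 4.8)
The plan is to compute the spectrum of $\Gamma(\Z/n,S)$ through the characters of the cyclic group $\Z/n$ and then exploit the Galois action on cyclotomic fields. Fix a primitive $n$-th root of unity $\zeta_n$. For a symmetric connection set $S\subseteq\Z/n$, the eigenvalues of $\Gamma(\Z/n,S)$ are precisely the character sums $\lambda_j=\sum_{s\in S}\zeta_n^{js}$, $j\in\Z/n$. For $d\mid n$ write $G_n(d)=\{a\in\Z/n:\gcd(a,n)=d\}$. The computational heart of the argument is the identity $\sum_{a\in G_n(d)}\zeta_n^{ja}=c_{n/d}(j)$, the Ramanujan sum (substitute $a=da'$ with $a'$ running over residues coprime to $n/d$); by von Sterneck's formula $c_m(j)=\mu\!\big(\tfrac{m}{\gcd(m,j)}\big)\,\varphi(m)/\varphi\!\big(\tfrac{m}{\gcd(m,j)}\big)\in\Z$. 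The ``if'' direction is then immediate: if $S=\bigcup_{d\in D}G_n(d)$, then $\lambda_j=\sum_{d\in D}c_{n/d}(j)$ is a sum of integers, so $\Gamma(\Z/n,S)$ is integral.

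For the converse, suppose every $\lambda_j$ is an integer. I would set $f=\mathbf{1}_S\colon\Z/n\to\{0,1\}$, so that $\lambda_j=\hat f(j):=\sum_{a}f(a)\zeta_n^{ja}$. For a unit $k\in(\Z/n)^\times$, the automorphism $\sigma_k\in\Gal(\Q(\zeta_n)/\Q)$ with $\zeta_n\mapsto\zeta_n^{k}$ satisfies $\sigma_k(\hat f(j))=\hat f(kj)$; since $\hat f(j)\in\Z$ is $\sigma_k$-fixed, $\hat f(kj)=\hat f(j)$ for every unit $k$. Because $(\Z/n)^\times\to(\Z/m)^\times$ is surjective for each $m\mid n$, any two indices $j,j'$ with $\gcd(j,n)=\gcd(j',n)$ satisfy $j'\equiv kj\pmod n$ for some unit $k$, so $\hat f(j)$ depends only on $\gcd(j,n)$; call this value $g(\gcd(j,n))$. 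Fourier inversion then gives, for every $a$,
\[
n\,f(a)=\sum_{j\in\Z/n}\hat f(j)\,\zeta_n^{-ja}=\sum_{d\mid n}g(d)\!\!\sum_{j\in G_n(d)}\!\!\zeta_n^{-ja}=\sum_{d\mid n}g(d)\,c_{n/d}(a).
\]
Since $c_m(a)$ depends only on $\gcd(a,m)$ and $\gcd(a,n/d)=\gcd(\gcd(a,n),n/d)$, the right-hand side is a function of $\gcd(a,n)$ alone. Hence $f$ is constant on each $G_n(d)$, and being $\{0,1\}$-valued it is the indicator of the union of those $G_n(d)$ on which it equals $1$; that is, $S$ is a union of $G_n(d)$'s.

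The main obstacle is the converse, and within it the step identifying ``integral spectrum'' with ``$\hat f$ factors through $\gcd(\,\cdot\,,n)$'': one must verify that the multiplicative action of $(\Z/n)^\times$ on $\Z/n$ has the $G_n(d)$ as its orbits and that $\Gal(\Q(\zeta_n)/\Q)$ implements this action on the level of the character sums $\hat f(j)$. Once that correspondence is in place, Fourier inversion together with the elementary divisibility identity for $\gcd$ finishes the argument mechanically, while the forward direction amounts to nothing more than the classical integrality of Ramanujan sums. (In the generality of the paper, one expects the role of $\zeta_n$, the cyclotomic Galois group, and Ramanujan sums to be played by the additive characters of $R$, the unit group $R^{\times}$ acting on $R$, and the corresponding ``generalized Ramanujan sums'' attached to a finite symmetric algebra.)
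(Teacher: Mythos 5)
Your proof is correct, and both halves are packaged differently from the paper's. For the ``if'' direction you evaluate the eigenvalues as Ramanujan sums and invoke von Sterneck's formula (the Klotz--Sander route); the paper's Proposition 2.1 never evaluates anything, it just observes that when $S$ is a union of orbits the sum $\sum_{s\in S}\zeta_n^{\psi(s)}$ is fixed by every $\sigma_a\in\Gal(\Q(\zeta_n)/\Q)$, hence lies in $\Z$. For the converse, you and the paper share the decisive step: integrality of $\hat f(j)$ plus the Galois action forces $\hat f(kj)=\hat f(j)$ for all units $k$, so $\hat f$ is constant on the multiplicative orbits $G_n(d)$. From there you finish by explicit Fourier inversion together with the fact that $c_{n/d}(a)$ depends only on $\gcd(a,n)$, transferring orbit-invariance from $\hat f$ back to $f$. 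The paper instead avoids any inversion formula by a dimension count: the space $\mathcal{A}$ of rational vectors with rational DFT contains the span $\mathcal{V}$ of the orbit indicators, the DFT matrix maps $\mathcal{A}$ into $\mathcal{V}$, and invertibility of that matrix forces $\dim\mathcal{A}\le\dim\mathcal{V}$, hence $\mathcal{A}=\mathcal{V}$ and $\mathbf{1}_S\in\mathcal{V}$. Your route is more computational and closer to So's original argument; the paper's linear-algebra formulation is exactly what lets the proof generalize to arbitrary finite symmetric $\Z/n$-algebras, where no Ramanujan-sum evaluation is available. One small correction to your closing parenthetical: in that generalization the relevant action is that of the scalars $(\Z/n)^{\times}$ on $R$, not of the full unit group $R^{\times}$; the two coincide only in the case $R=\Z/n$ treated here.
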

Here for each $d\mid n$,  \[ G_n(d) = \{m \in \Z/n\mid  \gcd(m,n)=d \}. \] 

In \cite{minavc2024gcd}, inspired by the analogy between number fields and function fields, we study gcd-graphs over polynomial rings. Among various things that we find, we show that the spectrum of these gcd-graphs also has an explicit formula via Ramanujan sums. An important insight of our work is the notion of a symmetric $\F_p$-algebra (see \cite[Definition 6.2]{minavc2024gcd}).  By their very definition, symmetric algebras are self-dual, and consequently, their characters are parametrized by their elements, which allows us to calculate the associated Gauss and Ramanujan sums quite explicitly.

In this article, we apply this circle of ideas to study integral graphs over a finite ring. Our main goal here is to generalize So's theorem to finite symmetric algebras (we refer the reader to the \cref{def:symmetric} for the precise definition of a symmetric algebra.) More precisely, we prove the following.

\begin{thm}
    Let $R$ be a finite symmetric $\Z/n$-algebra and $S \subset R \setminus \{0\}$ such that $S=-S$. Then the undirected graph $\Gamma(R,S)$ is integral if and only if $S$ is stable under the action of $(\Z/n)^{\times}.$
\end{thm}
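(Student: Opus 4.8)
The plan is to diagonalize the adjacency matrix of $\Gamma(R,S)$ by the additive characters of $R$ and then detect integrality of the resulting character sums through the action of $\Gal(\Q(\zeta_n)/\Q)\cong(\Z/n)^{\times}$ on $\Q(\zeta_n)$. Recall that for a Cayley graph on a finite abelian group $(R,+)$ with connection set $S=-S$, the eigenvalues are precisely the sums $\lambda_{\chi}=\sum_{s\in S}\chi(s)$ as $\chi$ runs over the dual group $\widehat R$, the vector $(\chi(x))_{x\in R}$ being an eigenvector. Because $R$ is a $\Z/n$-algebra we have $n\cdot 1_R=0$, so every $\chi$ takes values in the group $\mu_n$ of $n$-th roots of unity; hence each $\lambda_{\chi}$ is an algebraic integer lying in $\Z[\zeta_n]$, and it is real since $S=-S$. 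The graph is integral precisely when every such $\lambda_\chi$ lies in $\Z$, equivalently in $\Q$.

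Next I would invoke \cref{def:symmetric} to identify $\widehat R$ with $R$. Fix the faithful character $\psi\colon\Z/n\to\C^{\times}$ with $\psi(1)=\zeta_n$, and let $\theta\colon R\to\Z/n$ be a symmetrizing $\Z/n$-linear form, so that $(x,y)\mapsto\theta(xy)$ is a nondegenerate $\Z/n$-bilinear pairing. Setting $\chi_a(x):=\psi(\theta(ax))$, the assignment $a\mapsto\chi_a$ is a homomorphism $R\to\widehat R$; it is injective, since $\psi(\theta(ax))=1$ for all $x$ forces $\theta(a\cdot\phantom{x})=0$ and hence $a=0$ by nondegeneracy, and therefore bijective because $|\widehat R|=|R|$. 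Thus the spectrum of $\Gamma(R,S)$ is exactly $\{\lambda_a:=\sum_{s\in S}\chi_a(s):a\in R\}$.

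The single computation underlying both implications is the behaviour of $\lambda_a$ under $\sigma_t\in\Gal(\Q(\zeta_n)/\Q)$, the automorphism with $\sigma_t(\zeta_n)=\zeta_n^{t}$, for $t\in(\Z/n)^{\times}$. By $\Z/n$-linearity of $\theta$ one has $\sigma_t(\chi_a(x))=\psi(t\theta(ax))=\psi(\theta(a\cdot tx))=\chi_a(tx)$, where $tx$ denotes the scalar action of $t$ on $R$, i.e.\ multiplication by the unit $t\cdot 1_R\in R^{\times}$; hence
\[
\sigma_t(\lambda_a)=\sum_{s\in S}\chi_a(ts)=\sum_{s'\in tS}\chi_a(s').
\]
If $S$ is $(\Z/n)^{\times}$-stable then $tS=S$, so $\sigma_t(\lambda_a)=\lambda_a$ for all $a$ and all $t$; thus $\lambda_a\in\Q$, and being an algebraic integer it lies in $\Z$, proving $\Gamma(R,S)$ integral. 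Conversely, if every $\lambda_a\in\Z$, Fourier inversion on $(R,+)$ — using the same self-duality $a\mapsto\chi_a$ — gives, for each $b\in R$,
\[
\ind_S(b)=\frac{1}{|R|}\sum_{a\in R}\lambda_a\,\chi_a(-b).
\]
Applying $\sigma_t$ and using that $|R|$ and the $\lambda_a$ are rational integers while $\sigma_t(\chi_a(-b))=\chi_a(-tb)$, the right-hand side becomes $\ind_S(tb)$, whereas the left-hand side is fixed by $\sigma_t$; hence $\ind_S(tb)=\ind_S(b)$ for all $b\in R$ and $t\in(\Z/n)^{\times}$, i.e.\ $tS=S$.

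I expect the only delicate point — more a matter of care than a genuine obstacle — to be the proof that $a\mapsto\chi_a$ is a bijection $R\to\widehat R$: this is exactly where the symmetric-algebra hypothesis is used, and it is what fails for an arbitrary finite $\Z/n$-algebra. The rest is formal bookkeeping: verifying that the scalar action of $(\Z/n)^{\times}$ appearing in the Galois computation coincides with the action in the statement (it does, since $t\cdot 1_R\in R^{\times}$ whenever $t\in(\Z/n)^{\times}$), and noting that dividing by the fixed integer $|R|$ in the Fourier inversion is harmless. Specializing to $R=\Z/n$ with $\theta=\mathrm{id}$ recovers So's theorem, since the $(\Z/n)^{\times}$-stable subsets of $\Z/n$ are precisely the unions of the sets $G_n(d)$.
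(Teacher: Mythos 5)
Your proof is correct, and the forward direction is identical to the paper's (\cref{prop:stable}): Galois invariance of the eigenvalue sums under $\sigma_t(\zeta_n)=\zeta_n^t$ when $tS=S$. For the converse the two arguments rest on the same two pillars --- the self-duality $a\mapsto\chi_a$ coming from the non-degenerate form $\theta$, and the identity $\sigma_t(\chi_a(x))=\chi_a(tx)$ --- but package them differently. The paper introduces the space $\mathcal{A}$ of vectors with rational DFT and the span $\mathcal{V}$ of orbit indicator vectors, shows $A_R\mathcal{A}\subset\mathcal{V}$ by applying $\sigma_t$ to the output coordinates, and then concludes $\mathcal{A}=\mathcal{V}$ by a dimension count using invertibility of $A_R$; integrality then forces $\bm{1}_S\in\mathcal{V}$, i.e.\ $S$ is a union of orbits. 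You instead make the inverse of $A_R$ explicit via Fourier inversion, $\ind_S(b)=\frac{1}{|R|}\sum_a\lambda_a\chi_a(-b)$, and apply $\sigma_t$ to both sides to get $\ind_S(tb)=\ind_S(b)$ directly. Your route is shorter and avoids the auxiliary spaces and the dimension count (at the cost of invoking orthogonality of characters rather than mere invertibility of the DFT matrix), while the paper's version isolates the slightly more general statement that \emph{every} rational-spectrum vector is constant on $(\Z/n)^{\times}$-orbits. Your identification of $\widehat{R}$ with $R$, including the check that non-degeneracy of $\theta$ in the sense of \cref{def:symmetric} gives injectivity of $a\mapsto\chi_a$, matches the paper's Proposition on self-duality, and your remark that $t\cdot 1_R\in R^{\times}$ for $t\in(\Z/n)^{\times}$ correctly reconciles the Galois action with the scalar action in the statement.
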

We remark that our main theorem works for all $S$ but since we only work with undirected graphs, we impose the above conditions on $S$.  When $R=\Z/n$, this recovers So's theorem. We note, however, that our theorem applies to a much wider class of finite rings. More precisely, as we will show in the last section, all finite quotients of the ring of integers in a global field are symmetric. We believe that this observation could pave the way for future studies on gcd-graphs over such rings as well as Paley graphs associated with finite Hecke characters (see \cref{rem:future_work}). 

\section{Main results}
Let $R$ be a finite $\Z/n$-algebra. In this paper, we study Cayley graphs of the form $\Gamma(R,S)$ where $S$ is a symmetric subset of $R \setminus \{0\}$. We recall that $\Gamma(R,S)$ is the undirected graph equipped with the following data. 
\begin{enumerate}
    \item The vertex set of $\Gamma(R,S)$ is $R.$
    \item Two elements $a,b \in R$ are adjacent if there exists $s \in S$ such that $b=a+s.$
\end{enumerate}
We are interested in the case where $\Gamma(R,S)$ is integral; i.e, all of their eigenvalues are integers. By the circulant diagonalization theorem for finite abelian groups (see \cite{kanemitsu2013matrices}), the spectrum of $\Gamma(R,S)$ is given by the family 
\[ \left\{ \sum_{s \in S} \widehat{\psi}(s)\right \}_{\widehat{\psi}}, \]
where $\widehat{\psi}$ runs over the dual group $\widehat{R}:= {\rm Hom}(R, \C^{\times})$ of all characters of $(R, +)$. We remark that since $R$ is an $\Z/n$-algebra, $\widehat{\psi}(s)^n=1$ for all $\widehat{\psi} \in \widehat{R}$ and $s \in R.$ As a result, once we fix a primitive $n$th root of unity $\zeta_n \in \C^{\times}$, an element $\widehat{\psi} \in \widehat{R}$ can be expressed uniquely in the form 
\[ \widehat{\psi}(s) = \zeta_n^{\psi(s)} ,\]
where $\psi\colon R \to \Z/n$ is a group homomorphism. For the rest of our discussion, we will use this formulation to identify $\widehat{\psi}$ and $\psi$. 

Since $R$ is an $\Z/n$-algebra, $(\Z/n)^{\times}$ acts naturally on $R$. The following proposition gives a sufficient condition for $\Gamma(R,S)$ to be integral. 
\begin{prop} \label{prop:stable}
Suppose that $S$ is stable under the action of $(\Z/n)^{\times}.$ Then $\Gamma(R,S)$ is an integral graph. 
\end{prop}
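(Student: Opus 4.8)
The strategy is to show that the spectrum of $\Gamma(R,S)$ is stable under the action of $\Gal(\Q(\zeta_n)/\Q)$, which forces every eigenvalue to lie in $\Q$, and being an algebraic integer, in $\Z$. Recall from the circulant diagonalization recalled above that the eigenvalues are the numbers $\lambda_\psi := \sum_{s \in S} \zeta_n^{\psi(s)}$ as $\psi$ ranges over $\Hom(R,\Z/n)$. Each $\lambda_\psi$ lies in $\Q(\zeta_n)$, so it suffices to check that $\sigma(\lambda_\psi) = \lambda_\psi$ for every $\sigma \in \Gal(\Q(\zeta_n)/\Q)$.

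Fix such a $\sigma$; it is of the form $\sigma(\zeta_n) = \zeta_n^{a}$ for a unique $a \in (\Z/n)^{\times}$. Then
\[
\sigma(\lambda_\psi) = \sum_{s \in S} \zeta_n^{a\,\psi(s)} = \sum_{s \in S} \zeta_n^{\psi(as)},
\]
the second equality because $\psi$ is a $\Z/n$-module map (additive, and $\psi(as)=a\psi(s)$ in $\Z/n$), where $as$ denotes the image of $s$ under the action of $a \in (\Z/n)^{\times}$ on $R$. Now I would use the hypothesis that $S$ is stable under the $(\Z/n)^{\times}$-action: the map $s \mapsto as$ is a bijection of $R$ that carries $S$ onto $S$ (its inverse is multiplication by $a^{-1}$). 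Reindexing the sum by $s' = as$ therefore gives $\sigma(\lambda_\psi) = \sum_{s' \in S} \zeta_n^{\psi(s')} = \lambda_\psi$. Hence $\lambda_\psi \in \Q$; since it is also a sum of roots of unity, it is an algebraic integer, so $\lambda_\psi \in \Z$. As this holds for every $\psi$, the graph $\Gamma(R,S)$ is integral.

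There is no serious obstacle here — the argument is essentially the classical ``Galois symmetrization'' trick. The only point requiring a small amount of care is the interaction between the two descriptions of $(\Z/n)^{\times}$ acting: once as Galois conjugation on $\zeta_n$, and once as the algebra scalar action on $R$; the identity $\sigma(\zeta_n^{\psi(s)}) = \zeta_n^{\psi(as)}$ is exactly what links them, and it relies only on $\psi$ being $\Z/n$-linear, which holds because (as noted in the excerpt) any character $\widehat\psi$ of $(R,+)$ with $\widehat\psi^n = 1$ is of the form $\zeta_n^{\psi}$ for a group homomorphism $\psi\colon R \to \Z/n$, and group homomorphisms between $\Z/n$-modules automatically respect the $\Z/n$-action. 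One should also remark that the symmetric-algebra hypothesis on $R$ plays no role in this direction — it is only the $\Z/n$-algebra structure that matters — and that the necessity of the condition (the converse) is where symmetry will be essential.
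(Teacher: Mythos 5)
Your proof is correct and follows essentially the same route as the paper's: identify $\Gal(\Q(\zeta_n)/\Q)$ with $(\Z/n)^{\times}$, apply $\sigma_a$ to each eigenvalue $\sum_{s\in S}\zeta_n^{\psi(s)}$, use $\Z/n$-linearity of $\psi$ to rewrite $a\psi(s)$ as $\psi(as)$, and reindex using the stability of $S$. Your added remark that Galois invariance plus algebraic integrality forces the eigenvalues into $\Z$ makes explicit a step the paper leaves implicit, but the argument is the same.
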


\begin{proof}
The Galois group of $\Q(\zeta_n)/\Q$ is naturally isomorphic to $(\Z/n)^{\times}.$    More precisely, the map  
\[ \chi: (\Z/n)^\times\to \Gal(\Q(\zeta_n)/\Q), \]
defined by sending $a \mapsto \sigma_a$, where $\sigma_a(\zeta_n) = \zeta_n^a$, is an isomorphism. For each $\psi \in \Hom(R, \Z/n)$ and $a\in (\Z/n)^\times$, we have
\[ \sigma_a \left(\sum_{s \in S} \widehat{\psi}(s) \right) = \sigma_a \left(\sum_{s \in S} \zeta_n^{{\psi}(s)}. \right) = \sum_{s \in S} \zeta_n^{a \psi(s)} = \sum_{s \in S} \zeta_n^{\psi(as)} = \sum_{s \in S} \zeta_n^{\psi(s)}. \]
The last equality follows from the fact that $S$ is stable under the action of $(\Z/n)^{\times}$. This shows that $\sum_{s\in S} \zeta_n^{\psi(s)} \in \Z.$ We conclude that $\Gamma(R,S)$ is integral. 
\end{proof}

Our goal is to study the converse of \cref{prop:stable}. In this article, we provide a partial answer to this question. More precisely, we will show that the converse of \cref{prop:stable} holds for the class of symmetric $\Z/n$-algebras, whose definition we now recall. 

\begin{definition} \label{def:symmetric}
A finite $\Z/n$-algebra $R$ is called symmetric if there exists $\psi \in \Hom(R, \Z/n)$ such that the kernel of $\psi$ does not contain any non-zero ideal of $R.$ When $\psi$ exists, we call it a non-degenerate linear functional on $R.$
\end{definition}

\begin{expl}

$\Z/n$ is a symmetric $\Z/n$-algebra, where the identity map from $\Z/n$ to itself is a non-degenerate linear functional. More generally, as we will show in \cref{sec:example}, a finite quotient of the ring of integers in a global field is a symmetric algebra. 
\end{expl}

Let $R$ be a finite symmetric $\Z/n$-algebra equipped with a fixed linear functional $\psi\colon R \to \Z/n.$ For each $r \in R$, we can define   
\[ \psi_r \in \Hom(R, \Z/n), \]
 by the rule $\psi_r(t) = \psi(rt)$. Furthermore, the map 
\[ \Phi\colon R \to \Hom(R, \Z/n), \]
defined by sending $r \mapsto \psi_r$ is a group homomorphism. Since $\psi$ is non-degenerate, $\Phi$ is injective, hence surjective. In summary, we have the following proposition. 

\begin{prop}
Let $R$ be a finite symmetric $\Z/n$-algebra with a fixed non-degenerate linear functional $\psi.$ Then for each character $\widehat{\psi}$ of $R$, there exists a unique element $r\in R$ such that for all $t \in R$
\[ \widehat{\psi}(t) = \zeta_n^{\psi_r(t)} = \zeta_n^{\psi(rt)}.\]
\end{prop}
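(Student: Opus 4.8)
The statement is essentially the bijectivity of the map $\Phi\colon R\to\Hom(R,\Z/n)$, $r\mapsto\psi_r$, that is set up immediately before it, so the plan is to nail down that bijectivity and then translate it into the language of characters. First I would record that $\Phi$ is a well-defined homomorphism of abelian groups: $\psi_r(t)=\psi(rt)$ is additive in $t$ because $\psi$ is, and $\psi_{r+r'}=\psi_r+\psi_{r'}$ because $\psi((r+r')t)=\psi(rt)+\psi(r't)$; this is routine.

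The substantive step is injectivity, which is exactly where \cref{def:symmetric} enters. Suppose $\psi_r=0$, i.e. $\psi(rt)=0$ for all $t\in R$. Then the ideal $rR$ (a two-sided ideal when $R$ is commutative, as in all our applications; in general one passes to $RrR$) is contained in $\ker\psi$. Since $\psi$ is non-degenerate, $\ker\psi$ contains no non-zero ideal, forcing $rR=0$; as $1\in R$ this gives $r=r\cdot 1=0$. Hence $\ker\Phi=0$.

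To upgrade injectivity to bijectivity I would compare cardinalities. Because $R$ is a $\Z/n$-algebra we have $nR=0$, so $\Hom(R,\Z/n)=\Hom_{\Z}(R,\Z/n)$; identifying $\Z/n$ with $\tfrac1n\Z/\Z\subset\Q/\Z$ then shows that $\Hom(R,\Z/n)=\Hom(R,\Q/\Z)$ is the Pontryagin dual of the finite abelian group $R$, whence $\abs{\Hom(R,\Z/n)}=\abs{R}$. An injective map between finite sets of equal cardinality is bijective, so $\Phi$ is an isomorphism. Finally, fixing the primitive root $\zeta_n$, every character $\widehat{\psi}$ of $R$ has the form $t\mapsto\zeta_n^{\chi(t)}$ for a unique $\chi\in\Hom(R,\Z/n)$, and setting $r:=\Phi^{-1}(\chi)$ produces the unique element with $\widehat{\psi}(t)=\zeta_n^{\psi_r(t)}=\zeta_n^{\psi(rt)}$ for all $t$.

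I expect the only genuine obstacle to be phrasing the injectivity argument so that the hypothesis of \cref{def:symmetric} is invoked correctly — one needs that $rR$ is honestly an ideal and that ``no non-zero ideal lies in $\ker\psi$'' is strong enough to conclude $rR=0$, not merely that $rR$ is small. The cardinality identity and the passage from $\chi$ to $\widehat{\psi}$ are purely formal.
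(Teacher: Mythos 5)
Your proposal is correct and follows essentially the same route as the paper, which defines the same map $\Phi\colon r\mapsto\psi_r$, observes it is an injective group homomorphism by non-degeneracy, and concludes surjectivity by the equality $\abs{\Hom(R,\Z/n)}=\abs{R}$ for a finite group killed by $n$. You have simply spelled out the details (the ideal $rR\subset\ker\psi$, the Pontryagin-dual cardinality count, and the identification of characters with elements of $\Hom(R,\Z/n)$) that the paper leaves implicit.
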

For the rest of the article, we will implicitly fix an indexing of elements in $R$. Let $O_1, O_2, \ldots, O_d$ be the orbits of $R$ under the action of $(\Z/n)^{\times}.$ Let $\bm{v}_i \in \Q^{|R|}$ be the characteristics vector of $O_i$; namely 
\[ \bm{v}_i[r] = 
\begin{cases}
    1, & \text{if } r \in O_i \\
    0, & \text{if } r \not \in O_i.
\end{cases}
\]
By definition, $\bm{v}_1, \bm{v}_2, \ldots, \bm{v}_d$ are linearly independent over $\Q$. Let $\mathcal{V}$ be the $\Q$-vector space generated by the $\bm{v}_i$'s.

Let $A_{R}=(\zeta_n^{\psi_r(t)})_{r, t \in R} = (\zeta_n^{\psi(rt)})_{ r,t \in R}$ be the DFT matrix associated with $R$ (see \cite{kanemitsu2013matrices}). Then for each $R$-circulant matrix $C$ formed by a $1 \times |R|$-vector $\bm{v}$, the spectrum of $C$ is given by the vector $A_{R} \bm{v}.$ In particular, the spectrum of $\Gamma(R,S)$ is given by $A_{R} \bm{1}_{S}$ where $\bm{1}_{S}$ is the characteristic vector of $S.$

Following \cite{so2006integral}, we define the following vector space. 

\[ \mathcal{A} = \{\bm{v} \in \Q^{|R|} \mid A_{R} \bm{v} \in \Q^{|R|}\} .\]

By the proof of \cref{prop:stable}, we know that $\mathcal{V} \subset \mathcal{A}.$
\begin{prop} \label{prop:projection}
If $\bm{v} \in \mathcal{A}$ then $A_{R} \bm{v} \in \mathcal{V}.$
\end{prop}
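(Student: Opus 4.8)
The plan is to run a Galois-descent argument. For each $a\in(\Z/n)^{\times}$ I will introduce the coordinate-permutation operator $Q_a$ on $\Q^{|R|}$ defined by $(Q_a\bm{u})[r]=\bm{u}[ar]$, where $a$ acts on $R$ through the $\Z/n$-algebra structure. The point of this operator is twofold: (i) a vector is a $\Q$-linear combination of the characteristic vectors $\bm{v}_1,\dots,\bm{v}_d$ of the orbits exactly when it is constant on each orbit $O_i$, i.e. exactly when it is fixed by every $Q_a$, so that $\mathcal{V}=\{\bm{u}\in\Q^{|R|}: Q_a\bm{u}=\bm{u}\text{ for all }a\}$; and (ii) $Q_a$ will intertwine the matrix $A_R$ with the Galois automorphism $\sigma_a$ of $\Q(\zeta_n)/\Q$. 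Granting (i) and (ii), if $\bm{v}\in\mathcal{A}$ then $\bm{w}:=A_R\bm{v}$ is rational, hence fixed by every $\sigma_a$, hence fixed by every $Q_a$, hence lies in $\mathcal{V}$.

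Carrying this out, I would first identify $(\Z/n)^{\times}$ with $\Gal(\Q(\zeta_n)/\Q)$ via $a\mapsto\sigma_a$, $\sigma_a(\zeta_n)=\zeta_n^{a}$, as in the proof of \cref{prop:stable}, letting $\sigma_a$ act entrywise on matrices and coordinatewise on vectors over $\Q(\zeta_n)$. The heart of the proof is the identity $Q_aA_R=\sigma_a(A_R)$: the $(r,t)$-entry of $Q_aA_R$ is the $(ar,t)$-entry of $A_R$, namely $\zeta_n^{\psi(ar\cdot t)}=\zeta_n^{\psi(a(rt))}=\zeta_n^{a\,\psi(rt)}=\sigma_a\!\left(\zeta_n^{\psi(rt)}\right)$, where I use $a(rt)=(ar)t$ in the $\Z/n$-algebra $R$ together with the $\Z/n$-linearity of the functional $\psi$. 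With this identity the computation closes immediately: for $\bm{v}\in\mathcal{A}$ and $\bm{w}=A_R\bm{v}$, both of which lie in $\Q^{|R|}$ by the definition of $\mathcal{A}$, one gets $Q_a\bm{w}=Q_aA_R\bm{v}=\sigma_a(A_R)\bm{v}=\sigma_a(A_R)\sigma_a(\bm{v})=\sigma_a(A_R\bm{v})=\sigma_a(\bm{w})=\bm{w}$ for every $a$, so $\bm{w}\in\mathcal{V}$.

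I do not anticipate a genuine obstacle: essentially all of the content is the intertwining relation $Q_aA_R=\sigma_a(A_R)$, and the only delicate point is bookkeeping — getting $\sigma_a$ (rather than $\sigma_{a^{-1}}$) on the right-hand side with the chosen convention for $Q_a$, and recognizing that the symmetric-algebra hypothesis enters only through the $\Z/n$-linearity of $\psi$ and the identity $a(rt)=(ar)t$. It is worth noting that invertibility of $A_R$ is never used in this argument. Finally, since the inclusion $\mathcal{V}\subseteq\mathcal{A}$ is already known, \cref{prop:projection} identifies $\mathcal{A}$ with $\{\bm{v}\in\Q^{|R|}:A_R\bm{v}\in\mathcal{V}\}$, which is the form needed to determine $\mathcal{A}$ exactly afterwards.
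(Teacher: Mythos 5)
Your proof is correct and takes essentially the same route as the paper: the intertwining identity $Q_aA_R=\sigma_a(A_R)$ is just the matrix-level packaging of the paper's coordinatewise computation $\sigma_a(u_{r})=u_{ar}$, and both arguments conclude from the rationality of $\bm{v}$ and of $A_R\bm{v}$ that the output vector is constant on each $(\Z/n)^{\times}$-orbit, hence lies in $\mathcal{V}$. Your side remarks (that invertibility of $A_R$ is not needed here, and that $\psi$ enters only through $\Z/n$-linearity) are accurate.
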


\begin{proof}
Let $\bm{v}=(v_t)_{t \in R} \in \Q^{|R|}$ and $A_R \bm{v} = (u_r)_{r \in R}.$ Because $\bm{v} \in \mathcal{A}$, $u_r \in \Q$ for all $r$. By definition 
\[ u_r = \sum_{t \in R} \zeta_n^{\psi(rt)} v_t.\]
We claim that if $r_1,r_2$ belong to the same equivalence class ($r_1,r_2$ belong to the same orbit) then $u_{r_1} = u_{r_2}.$ This would imply that $A_R \bm{v} \in \mathcal{V}$. To prove this statement, we remark that since $r_1, r_2$ belong to the same equivalence class, we can find $a \in (\Z/n)^{\times}$ such that $ar_1 = r_2.$ We then have 
\[ u_{r_1} = \sigma_a(u_{r_1}) =  \sum_{t \in R} \zeta_n^{\psi(ar_1t)}v_t =  \sum_{t \in R} \zeta_n^{\psi(r_2t)} v_t = u_{r_2}.
\qedhere\]
\end{proof}

\begin{cor} \label{cor:equal}
    $\mathcal{A} = \mathcal{V}.$ In particular $\{\bm{v}_i\}_{i=1}^d$ forms a basis for $\mathcal{A}.$
\end{cor}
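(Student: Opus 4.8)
The plan is to deduce the corollary from the two facts already in hand, together with the invertibility of the discrete Fourier transform matrix $A_R$. We know that $\mathcal{V}\subseteq\mathcal{A}$ (from the proof of \cref{prop:stable}) and that $A_R$ carries $\mathcal{A}$ into $\mathcal{V}$ (\cref{prop:projection}). Since the $\bm{v}_i$ are linearly independent we have $\dim_\Q\mathcal{V}=d$, so it is enough to show $\dim_\Q\mathcal{A}\le d$; the inclusion $\mathcal{V}\subseteq\mathcal{A}$ then forces $\mathcal{A}=\mathcal{V}$ and makes $\{\bm{v}_i\}_{i=1}^d$ a basis.

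First I would check that $A_R$ is a nonsingular complex matrix. Because $\psi$ is non-degenerate, $\Phi\colon R\to\Hom(R,\Z/n)$, $r\mapsto\psi_r$, is a bijection, so the rows of $A_R=(\zeta_n^{\psi(rt)})_{r,t}$ run over the full set of $|R|$ distinct characters of the finite abelian group $(R,+)$. The orthogonality relations for these characters give $A_R\,\overline{A_R}^{\,\mathsf T}=|R|\,I$; equivalently, for $r,s\in R$ one has $\sum_{t\in R}\zeta_n^{\psi((r-s)t)}=|R|$ when $r=s$ and $0$ otherwise, the vanishing being again a restatement of non-degeneracy. In particular $A_R$ is invertible. (One even has $A_R^2=|R|\,P$, where $P$ is the permutation matrix of the involution $r\mapsto -r$ on $R$.)

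Now consider the $\Q$-linear map $\mathcal{A}\to\mathcal{V}$ given by $\bm{v}\mapsto A_R\bm{v}$: it is well defined by \cref{prop:projection} and injective because $A_R$ is nonsingular. Hence $\dim_\Q\mathcal{A}\le\dim_\Q\mathcal{V}=d$, and combined with $\mathcal{V}\subseteq\mathcal{A}$ this yields $\mathcal{A}=\mathcal{V}$. Alternatively, without the dimension count: for $\bm v\in\mathcal A$ we have $A_R\bm v\in\mathcal V\subseteq\mathcal A$, so applying \cref{prop:projection} a second time gives $A_R^2\bm v=|R|\,P\bm v\in\mathcal V$; since $-1\in(\Z/n)^{\times}$, the involution $r\mapsto-r$ maps each orbit $O_i$ to itself and hence stabilizes $\mathcal V$, so $\bm v=P^2\bm v=P(P\bm v)\in P(\mathcal V)=\mathcal V$.

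The only step carrying genuine content is the invertibility of $A_R$, and I expect that to be the crux: it is precisely the orthogonality of characters, available here thanks to the symmetric (self-dual) structure of $R$ encoded by the non-degenerate functional $\psi$.
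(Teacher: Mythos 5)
Your argument is correct and follows the same route as the paper: the inclusion $\mathcal{V}\subseteq\mathcal{A}$, the fact that $A_R$ maps $\mathcal{A}$ into $\mathcal{V}$ (\cref{prop:projection}), and the invertibility of $A_R$ give $\dim\mathcal{A}\le\dim\mathcal{V}$ and hence equality. Your explicit verification of invertibility via character orthogonality (and the bonus identity $A_R^2=|R|P$) is a welcome elaboration of a step the paper leaves implicit, but the proof is essentially the paper's.
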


\begin{proof}
    By \cref{prop:projection} we know that $A_{R} \mathcal{A} \subset \mathcal{V}.$ Since $A_R$ is invertible, we conclude that $\dim(\mathcal{A}) \leq \dim(\mathcal{V}).$ Since $\mathcal{V} \subset \mathcal{A}$, we conclude that $\mathcal{A} =\mathcal{V}.$
\end{proof}

\begin{thm} \label{thm:main}
Let $R$ be a finite symmetric $\Z/n$-algebra and  $S \subset R$. Then $\Gamma(R,S)$ is an integral graph if and only if $S$ is stable under the action of $(\Z/n)^{\times}.$
\end{thm}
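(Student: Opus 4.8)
The plan is to derive \cref{thm:main} from the identification $\mathcal{A} = \mathcal{V}$ established in \cref{cor:equal}, together with the combinatorial fact that the orbits of $(\Z/n)^\times$ on $R$ partition $R$. The forward implication requires no new work: if $S$ is stable under the action of $(\Z/n)^\times$, then $\Gamma(R,S)$ is integral by \cref{prop:stable}. So the substance is the converse.

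Assume $\Gamma(R,S)$ is integral. Since the spectrum of $\Gamma(R,S)$ is the vector $A_R \bm 1_S$, every coordinate of $A_R \bm 1_S$ is a rational integer; in particular $A_R \bm 1_S \in \Q^{|R|}$, which is precisely the statement that $\bm 1_S \in \mathcal{A}$. By \cref{cor:equal} we have $\mathcal{A} = \mathcal{V}$, so $\bm 1_S$ is a $\Q$-linear combination $\bm 1_S = \sum_{i=1}^d c_i \bm v_i$ of the orbit characteristic vectors, with uniquely determined scalars $c_i \in \Q$.

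Now I would exploit that the orbits $O_1,\ldots,O_d$ partition $R$: the supports of $\bm v_1,\ldots,\bm v_d$ are pairwise disjoint, and each $r \in R$ lies in exactly one orbit, say $O_{i(r)}$. Reading off the coordinate of $\bm 1_S = \sum_i c_i \bm v_i$ indexed by $r$ gives $\bm 1_S[r] = c_{i(r)}$, and since the left-hand side is $0$ or $1$, each scalar $c_i$ (attained at any $r \in O_i$) lies in $\{0,1\}$. Therefore $S = \bigcup_{i\,:\,c_i = 1} O_i$ is a union of $(\Z/n)^\times$-orbits, i.e. $S$ is stable under the action of $(\Z/n)^\times$, which completes the proof.

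As for where the difficulty lies: the genuinely nontrivial input is \cref{cor:equal} — equivalently \cref{prop:projection}, whose proof combines the $\Gal(\Q(\zeta_n)/\Q) \cong (\Z/n)^\times$ action with the self-duality supplied by a non-degenerate linear functional on the symmetric algebra $R$ — and that has already been carried out in the excerpt. The remaining step above is elementary linear algebra and combinatorics; the one point to handle with care is that integrality of the eigenvalues yields \emph{rationality} of the entries of $A_R \bm 1_S$, hence membership of $\bm 1_S$ in $\mathcal{A}$, rather than merely that those entries are algebraic integers.
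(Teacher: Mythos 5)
Your proposal is correct and follows exactly the paper's own argument: forward direction via \cref{prop:stable}, converse by observing integrality gives $\bm 1_S \in \mathcal{A} = \mathcal{V}$ (\cref{cor:equal}) and then using the $\{0,1\}$-valued coordinates together with the disjointness of the orbits to conclude $S$ is a union of orbits. You merely spell out in more detail the final combinatorial step that the paper states in one line.
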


\begin{proof}
    The forward direction has been proved in \cref{prop:stable}. Let us prove the other direction. Suppose that $\Gamma(R,S)$ is integral. Then by \cref{cor:equal}, $\bm{1}_{S} \in \mathcal{V}$. Since the components of $\bm{1}_{S}$ are in the set $\{0,1\}$, we conclude that $S$ is a union of some of the orbits $O_1, O_2, \ldots, O_d$. In other words, $S$ is stable under the action of $(\Z/n)^{\times}.$
\end{proof}

\begin{rem}
    If $R = \Z/n$ then $R$ is a symmetric $\Z/n$-algebra where $\psi\colon R \to \Z/n$ is the identity map. Furthermore, we can see that the orbits of $(\Z/n)$ under the action of $(\Z/n)^{\times}$ are precisely $\{G_n(d)\}_{d \mid n}$ where 
    \[ G_n(d) = \{m \in \Z/n\mid  \gcd(m,n)=d \}. \]
    We then see that \cref{thm:main} is a generalization of \cite[Theorem 7.1]{so2006integral}.
\end{rem}

\section{Examples of symmetric algebras} \label{sec:example}
In this section, we provide some constructions of finite symmetric $\Z/n$-algebras. The first example is quite standard (see \cite[Example 3.15E]{lam2012lectures}).
\begin{expl} \label{expl:group_ring}
Let $G$ be a finite abelian group and $R=\Z/n[G]$ the group algebra of $G$ with coefficients in $\Z/n.$ Let $\psi\colon R \to \Z/n$ be the linear functional defined by 
\[ \psi\left(\sum_{g} a_g g \right) = a_{e}.\]
Then $\psi$ is non-degenerate and hence $R$ is a symmetric $\Z/n$-algebra. 
\end{expl}
We now show that all finite quotient rings of the ring of integers in a global field is a symmetric $\Z/n$-algebra where $n$ is the characteristic of the ring. We start this investigation with a series of simple lemmas. 

\begin{lem} \label{prop:product}
    If $R_1, R_2$ are two symmetric $\Z/n$-algebras then so is $R_1 \times R_2.$
\end{lem}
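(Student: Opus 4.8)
The plan is to construct a non-degenerate linear functional on $R_1 \times R_2$ directly from the given ones on $R_1$ and $R_2$. Let $\psi_1 \colon R_1 \to \Z/n$ and $\psi_2 \colon R_2 \to \Z/n$ be non-degenerate linear functionals, which exist by hypothesis. Define $\psi \colon R_1 \times R_2 \to \Z/n$ by $\psi(x_1, x_2) = \psi_1(x_1) + \psi_2(x_2)$. This is clearly a group homomorphism (indeed a $\Z/n$-module homomorphism), so it remains to check non-degeneracy, i.e.\ that $\ker \psi$ contains no non-zero ideal of $R_1 \times R_2$.

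The key step is the observation that every ideal $I$ of $R_1 \times R_2$ decomposes as $I = I_1 \times I_2$ with $I_j$ an ideal of $R_j$; this is standard, using the orthogonal idempotents $e_1 = (1,0)$ and $e_2 = (0,1)$, since $I_j = e_j I$. Now suppose $I = I_1 \times I_2 \subseteq \ker \psi$ is a non-zero ideal, say $I_1 \neq 0$ (the case $I_2 \neq 0$ is symmetric). For every $x_1 \in I_1$ we have $(x_1, 0) \in I$, hence $\psi(x_1, 0) = \psi_1(x_1) = 0$; thus $I_1 \subseteq \ker \psi_1$, contradicting the non-degeneracy of $\psi_1$. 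Therefore $\ker \psi$ contains no non-zero ideal, so $\psi$ is non-degenerate and $R_1 \times R_2$ is symmetric.

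I expect no serious obstacle here: the only point requiring a small argument is the decomposition of ideals in a product ring, and even that is routine. One stylistic choice to settle is whether to phrase the functional as $\psi_1 \circ \pi_1 + \psi_2 \circ \pi_2$ (with $\pi_j$ the projections) or componentwise as above; the componentwise version is cleanest. By induction this immediately extends to any finite product $R_1 \times \cdots \times R_m$ of symmetric $\Z/n$-algebras, which is presumably how the lemma will be applied in the subsequent analysis of quotients of rings of integers (which, after localization, become products of local rings).
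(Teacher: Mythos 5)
Your proposal is correct and follows exactly the paper's approach: the paper defines the same functional $\psi(r_1,r_2)=\psi_1(r_1)+\psi_2(r_2)$ and simply asserts that non-degeneracy "can be checked." You supply that check (via the standard decomposition $I=I_1\times I_2$ of ideals in a product ring), which is a welcome addition but not a departure.
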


\begin{proof}
Let $\psi_1\colon R_1 \to \Z/n$ and $\psi_2\colon R_2 \to \Z/n$ be two non-degenrate linear functionals. Let $\psi\colon R_1 \times R_2 \to \Z/n$ be defined as 
\[ \psi(r_1,r_2) = \psi_1(r_1) + \psi_2(r_2).\]
We can check that $\psi$ is a non-degenerate $\Z/n$-linear functional on $R_1 \times R_2.$ By definition, $R_1 \times R_2$ is a symmetric $\Z/n$-algebra. 
\end{proof}

\begin{lem} \label{lem:product_coefficients}
If $R_1$ is a symmetric $\Z/n_1$-algebra and $R_2$ is a symmetric $\Z/n_2$-algebra with $\gcd(n_1, n_2)=1$, then $R_1 \times R_2$ is a symmetric algebra over $\Z/n$, where $n=n_1n_2$.
\end{lem}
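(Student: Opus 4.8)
The plan is to let the Chinese Remainder Theorem do the work. Write $n = n_1 n_2$; since $\gcd(n_1, n_2) = 1$, CRT gives a ring isomorphism $\Z/n \xrightarrow{\ \sim\ } \Z/n_1 \times \Z/n_2$, and since $R_1$ is a $\Z/n_1$-algebra and $R_2$ a $\Z/n_2$-algebra, the product ring $R_1 \times R_2$ is canonically a $\Z/n$-algebra (the structure map $\Z \to R_1 \times R_2$ kills $n_1 n_2 = n$). Fix non-degenerate linear functionals $\psi_1 \colon R_1 \to \Z/n_1$ and $\psi_2 \colon R_2 \to \Z/n_2$, and define
\[
\psi \colon R_1 \times R_2 \longrightarrow \Z/n_1 \times \Z/n_2 \;\cong\; \Z/n, \qquad \psi(r_1, r_2) = \bigl(\psi_1(r_1), \psi_2(r_2)\bigr).
\]
Since $\psi_1$ and $\psi_2$ are additive and addition on $\Z/n_1 \times \Z/n_2$ is componentwise (and matches addition on $\Z/n$ under CRT), $\psi$ is a homomorphism of abelian groups, i.e. a linear functional on the $\Z/n$-algebra $R_1 \times R_2$.

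The one point needing an argument is non-degeneracy of $\psi$. Here I would use that $R_1$ and $R_2$ are unital rings, so every ideal of $R_1 \times R_2$ is of the form $I_1 \times I_2$ with $I_1$ an ideal of $R_1$ and $I_2$ an ideal of $R_2$ (decompose along the central idempotents $(1,0)$ and $(0,1)$). If such an ideal is contained in $\ker \psi$, then $\psi_1(I_1) \times \psi_2(I_2) = \psi(I_1 \times I_2) = \{0\}$, so $I_1 \subseteq \ker \psi_1$ and $I_2 \subseteq \ker \psi_2$; non-degeneracy of $\psi_1$ and $\psi_2$ then forces $I_1 = 0$ and $I_2 = 0$. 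Thus $\ker \psi$ contains no nonzero ideal, $\psi$ is non-degenerate, and $R_1 \times R_2$ is a symmetric $\Z/n$-algebra.

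There is no real obstacle; the only things to be careful about are the standard description of ideals in a product of unital rings and the observation that an ideal of the $\Z/n$-algebra $R_1 \times R_2$ is exactly a ring-theoretic ideal, so that non-degeneracy may be checked ring-theoretically. As an alternative route, one could instead note that via the additive (non-unital) embedding $\Z/n_j \hookrightarrow \Z/n$ the functional $\psi_j$ already exhibits $R_j$ as a symmetric $\Z/n$-algebra, and then invoke \cref{prop:product} verbatim; I expect the direct construction above to be the cleaner write-up.
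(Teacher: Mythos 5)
Your proof is correct and takes essentially the same route as the paper, whose entire argument is the one-line remark that the lemma ``follows from the Chinese remainder theorem and the proof of \cref{prop:product}.'' Your write-up merely makes explicit what the authors leave implicit: the CRT identification $\Z/n \cong \Z/n_1 \times \Z/n_2$, the resulting product functional, and the check of non-degeneracy via the decomposition of ideals in a product of unital rings.
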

\begin{proof}
    This follows from the Chinese remainder theorem and the proof of \cref{prop:product}.
\end{proof}

\begin{lem} \label{lem:induced}
    If $R$ is a symmetric $\Z/n$-algebra then $R$ is also a symmetric $\Z/m$-algebra for all $ n \mid m.$
\end{lem}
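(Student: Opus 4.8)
The plan is to manufacture a non-degenerate $\Z/m$-linear functional on $R$ directly out of the given non-degenerate functional over $\Z/n$. The first step is to make sure $R$ really is a $\Z/m$-algebra when $n \mid m$: the canonical surjection $\Z/m \twoheadrightarrow \Z/n$ composed with the structure map $\Z/n \to R$ equips $R$ with a $\Z/m$-algebra structure whose underlying module action factors through $\Z/n$. I would also record here that the two-sided ideals of $R$ are the same whether $R$ is viewed as a $\Z/n$- or a $\Z/m$-algebra, since an ideal is a purely ring-theoretic object and the prime subring of $R$ is unchanged by enlarging the coefficient ring.

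The second step is to fix a non-degenerate $\psi \in \Hom(R,\Z/n)$ as in \cref{def:symmetric}, let $\iota\colon \Z/n \to \Z/m$ be the injective homomorphism $x \mapsto (m/n)x$, and set $\psi' := \iota \circ \psi \in \Hom(R,\Z/m)$. Being a group homomorphism between $\Z/m$-modules, $\psi'$ is automatically $\Z/m$-linear, so it is a $\Z/m$-linear functional on $R$; alternatively one checks $\iota(\bar\lambda\, x) = \lambda\,\iota(x)$ for $\lambda \in \Z/m$ with reduction $\bar\lambda \in \Z/n$ by hand, which is immediate from $(m/n)\cdot n \equiv 0 \pmod m$.

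The third and final step is to observe that $\ker\psi' = \ker\psi$ because $\iota$ is injective. By hypothesis $\ker\psi$ contains no nonzero ideal of $R$, hence neither does $\ker\psi'$, so $\psi'$ is a non-degenerate $\Z/m$-linear functional and $R$ is a symmetric $\Z/m$-algebra. I do not expect any genuine obstacle here; the only point worth stating carefully is that the notion of ``ideal of $R$'' does not change when the coefficient ring is enlarged, which is what lets the non-degeneracy of $\psi$ transfer verbatim to $\psi'$.
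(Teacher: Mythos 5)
Your proof is correct and follows essentially the same route as the paper: both use the injection $\iota_{n,m}\colon \Z/n \to \Z/m$, $a \mapsto (m/n)a$, and take $\psi' = \iota_{n,m}\circ\psi$, with non-degeneracy preserved because $\iota_{n,m}$ is injective. The extra details you supply (the $\Z/m$-algebra structure, the linearity check, and the invariance of the notion of ideal) are correct and merely make explicit what the paper leaves implicit.
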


\begin{proof}
    There is an embedding of $ \iota_{n,m}\colon \Z/n \to \Z/m$ defined by $a \mapsto \frac{m}{n}a.$ Let $\psi\colon R \to \Z/n$ be a non-degenerate linear functional. Then the composition of $\psi$ with $\iota_{n,m}$ is a non-degenerate linear functional of $R$ over $\Z/m.$
\end{proof}

\begin{prop}  \label{prop:quotient_induced}
    Let $A$ be an integral domain. Let $f \in A$ such that $R=A/f$ is a finite ring of characteristics n. Suppose that $R$ is a symmetric $\Z/n$-algebra. Then for each $g \mid f$, $A/g$ is also a symmetric $\Z/n$-algebra. 
\end{prop}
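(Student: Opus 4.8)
The key point is that $A/g$ is a quotient of $A/f$ whenever $g \mid f$, via the natural surjection $\pi\colon A/f \twoheadrightarrow A/g$. So the statement reduces to: a quotient of a finite symmetric $\Z/n$-algebra need not be symmetric in general, but here we have the extra structure that $A$ is a domain and the ideals involved are principal. The plan is to produce a non-degenerate linear functional on $A/g$ by composing a suitable functional with multiplication by $f/g$.

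Concretely, write $f = gh$ in $A$. I would first define $\lambda\colon A/f \to A/f$ (or rather think of the ideal $(h)/(f) = \ker(\pi) \cdot$ something) — more precisely, multiplication by $h$ gives an $A$-module map $A/g \to A/f$, $\overline{x} \mapsto \overline{hx}$, which is well-defined since $g \mid f$ implies $hg = f$ kills the target appropriately; and this map is injective because $A$ is a domain (if $hx \in (f) = (gh)$ then $x \in (g)$). Let $\mu_h\colon A/g \hookrightarrow A/f$ denote this injection. Now let $\psi\colon A/f \to \Z/n$ be the given non-degenerate functional on $R = A/f$, and set $\varphi := \psi \circ \mu_h\colon A/g \to \Z/n$. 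This is clearly $\Z/n$-linear. It remains to check non-degeneracy: suppose $I \subseteq A/g$ is a nonzero ideal contained in $\ker\varphi$. Then $\mu_h(I)$ is a nonzero $A$-submodule of $A/f$ contained in $\ker\psi$ (using injectivity of $\mu_h$ for "nonzero"). The issue is that $\mu_h(I)$ is an $A/f$-submodule, i.e. an ideal of $A/f$, since $\mu_h$ is $A$-linear and $I$ is an ideal — indeed $\mu_h(I) = h \cdot \tilde{I}$ where $\tilde I$ is the preimage ideal of $I$ in $A/f$, and $h\cdot(A/f)$-multiples of an ideal form an ideal. So $\mu_h(I)$ is a nonzero ideal of $A/f$ lying in $\ker\psi$, contradicting non-degeneracy of $\psi$. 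Hence $\varphi$ is non-degenerate and $A/g$ is a symmetric $\Z/n$-algebra. Finally, one should note that the characteristic of $A/g$ divides that of $A/f$, hence divides $n$; by \cref{lem:induced} we may still regard $A/g$ as a symmetric $\Z/n$-algebra even if its characteristic is a proper divisor of $n$. (Alternatively, if one wants the characteristic to be exactly $n$, this is an additional hypothesis or follows from the context in which the proposition is applied.)

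The step I expect to require the most care is verifying that $\mu_h(I)$ is genuinely an \emph{ideal} of $A/f$ rather than just an $A$-submodule, and that "nonzero ideal of $A/g$" transports to "nonzero ideal of $A/f$" — this is exactly where the domain hypothesis on $A$ (giving injectivity of multiplication by $h$) is used, and it is the heart of why the analogous statement can fail without it. The rest is routine bookkeeping with principal ideals and the third isomorphism theorem. One subtlety worth double-checking: the map $\mu_h$ depends on the factorization $f = gh$, and if $A$ is not a UFD the element $h$ is only well-defined up to the ideal $(g)$ — but since we only ever evaluate $\psi$ on $\mu_h(A/g)$ and the ambiguity in $h$ lands in $(g)\cdot(\text{stuff}) \subseteq (f) = 0$ in $A/f$, the functional $\varphi$ is in fact independent of the choice.
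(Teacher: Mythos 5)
Your proof is correct and takes essentially the same route as the paper's: your functional $\varphi=\psi\circ\mu_h$ is exactly the paper's $\psi_{f/g}(a)=\psi\bigl(\tfrac{f}{g}a\bigr)$, and both arguments use the domain hypothesis to make multiplication by $h=f/g$ injective, so that a nonzero ideal of $A/g$ inside $\ker\varphi$ would produce a nonzero ideal of $A/f$ inside $\ker\psi$.
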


\begin{proof}
Let $\psi_{f/g}\colon A/g \to \Z/n$ be  defined by 
    \[ \psi_{f/g}(a) = \psi\left(\frac{f}{g}a \right) ,\]
    We can see that $\psi_{f/g}$ is a linear function on $A/g.$ We claim that it is non-degenerate as well. Suppose to the contrary that the kernel of $\psi_{f/g}$ contains a non-zero ideal $I \subset A/g$. Let $\bar{a}$ be an arbitrary element in $I$ and let $a$ be a lift of $\bar{a}$ to $A/f.$ By definition, $\langle \bar{a} \rangle \subset I.$ As a result, for all $b \in A/f$
    \[ \psi \left(\frac{f}{g} \bar{a}b \right) = \psi\left(\frac{f}{g} ab \right) = 0 .\]
    This shows that the ideal generated by $\frac{f}{g}a$ belongs to the kernel of $\psi.$ Since $\psi$ is non-degenerate, we conclude that $\langle \frac{f}{g}a \rangle $ is the zero ideal in $A/f.$ As a result, we can find $h \in A$ such that  $\frac{f}{g} a= fh$. Since $A$ is an integral domain, this implies that $a = hg$ and hence $\bar{a} =0.$
\end{proof}
\begin{lem} \label{lem:local}
Let $K$ be a finite extension of $\Q_p$ and $\OO_K$ its ring of algebraic integers. Let $\mathcal{I} \subset \OO_K$ be a non zero ideal in $\OO_K$ and $p^{a}$ be the power of $p$ such that 
\[  \mathcal{I} \cap \Z_p = p^{a} \Z_p.\]
Then $\OO_K/\mathcal{I}$ is a symmetric $\Z/p^a$-algebra. 
\end{lem}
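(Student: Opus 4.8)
The plan is to reduce everything to the trace form on $\OO_K$ and use the fact that the trace pairing on a finite separable extension of local fields is non-degenerate (equivalently, that the different/inverse different is invertible as a fractional ideal). Here is the outline. First I would set up a $\Z_p$-linear functional on $\OO_K/\mathcal{I}$ as follows: since $K/\Q_p$ is separable, the trace map $\Tr_{K/\Q_p}\colon K \to \Q_p$ is surjective and the pairing $(x,y)\mapsto \Tr_{K/\Q_p}(xy)$ is non-degenerate. Because $\OO_K$ is the integral closure of $\Z_p$, the different $\mathfrak{d}_{K/\Q_p}$ is a nonzero ideal of $\OO_K$, and the inverse different $\mathfrak{d}_{K/\Q_p}^{-1}=\{x\in K : \Tr_{K/\Q_p}(x\OO_K)\subseteq \Z_p\}$ is a fractional ideal. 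Pick $\delta\in K\sx$ (in fact $\delta$ a generator of $\mathfrak{d}_{K/\Q_p}^{-1}$ localized appropriately, or simply $\delta$ with $\delta\OO_K = \mathfrak{d}_{K/\Q_p}^{-1}$ since $\OO_K$ is a PID being a DVR) so that $\lambda_0\colon \OO_K\to\Z_p$, $\lambda_0(x):=\Tr_{K/\Q_p}(\delta x)$, is a surjective $\Z_p$-linear map whose kernel contains no nonzero ideal of $\OO_K$: indeed if $\mathfrak{a}\subseteq\ker\lambda_0$ were a nonzero ideal, then $\Tr_{K/\Q_p}(\delta\mathfrak{a}\cdot\OO_K) = \lambda_0(\mathfrak{a}\OO_K)=0$, so $\delta\mathfrak{a}\subseteq\mathfrak{d}_{K/\Q_p}^{-1}=\delta\OO_K$, giving $\mathfrak{a}\subseteq\OO_K$ — which is no contradiction by itself, so I must sharpen: scale $\delta$ so that $\delta\OO_K = \mathfrak{d}_{K/\Q_p}^{-1}$ exactly, and then $\delta\mathfrak{a}\subseteq\delta\OO_K$ forces nothing; instead use that $\lambda_0$ restricted to any nonzero ideal $\mathfrak{a}=\pi_K^k\OO_K$ is the map $x\mapsto\Tr_{K/\Q_p}(\delta\pi_K^k\OO_K)$, which is surjective onto $\Z_p$ precisely because $\delta\pi_K^k\OO_K$ is still a fractional ideal containing elements of trace a unit — more cleanly, $\lambda_0$ non-degenerate means $\lambda_0(\mathfrak{a})\neq 0$ for every nonzero ideal, and this is exactly the statement that $\mathfrak{d}_{K/\Q_p}^{-1}\cdot\mathfrak{a}\not\subseteq\mathfrak{d}_{K/\Q_p}^{-1}\cdot\m_K^{\infty}$, i.e. $\Tr_{K/\Q_p}$ does not vanish on the fractional ideal $\delta\mathfrak{a}$, which holds since $\Tr$ is surjective on $K$ and $\delta\mathfrak{a}$ is open. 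So $\OO_K$ itself is a symmetric $\Z_p$-algebra via $\lambda_0$, and by composing with reduction $\Z_p\to\Z/p^a$ I get a $\Z/p^a$-linear functional $\bar\lambda_0$ on $\OO_K$.

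Next I would pass to the quotient. The functional $\bar\lambda_0\colon\OO_K\to\Z/p^a$ factors through $\OO_K/\mathcal{I}$ if and only if $\bar\lambda_0(\mathcal{I})=0$, which need not hold for the given $\mathcal{I}$; to fix this I would instead mimic the construction of \cref{prop:quotient_induced}: choose $\delta$ adapted to $\mathcal{I}$, that is, replace $\delta$ by $\delta'$ so that $\delta'\OO_K = \mathfrak{d}_{K/\Q_p}^{-1}\cdot\mathcal{I}$ — wait, this overshoots; the cleanest route is the one the paper already uses for PIDs. Since $\OO_K$ is a DVR, $\mathcal{I}=\pi_K^{m}\OO_K$ for some $m\ge 0$ where $\pi_K$ is a uniformizer, and $\mathcal{I}\cap\Z_p=p^a\Z_p$ pins down $a=\lceil m/e\rceil$ with $e$ the ramification index. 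Then define $\psi\colon\OO_K/\mathcal{I}\to\Z/p^a$ by $\psi(\bar x) = \bar\lambda_0(\pi_K^{?}x)\bmod p^a$ — I'll choose the shift so that $\pi_K^{?}\mathcal{I}$ lands in the kernel of $\lambda_0$ modulo $p^a$, i.e. so that $\lambda_0(\pi_K^{?}\mathcal{I})\subseteq p^a\Z_p$; concretely $\psi(\bar x):=\lambda_0(\delta_{\mathcal I} x)\bmod p^a$ where $\delta_{\mathcal I}$ is chosen with $v_K(\delta_{\mathcal I})$ maximal subject to $\lambda_0(\delta_{\mathcal I}\mathcal I)\subseteq p^a\Z_p$. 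This is well-defined on the quotient, and non-degeneracy on $\OO_K/\mathcal{I}$ amounts to: for every ideal $\mathcal{J}$ with $\mathcal{I}\subsetneq\mathcal{J}\subseteq\OO_K$, one has $\lambda_0(\delta_{\mathcal I}\mathcal J)\not\subseteq p^a\Z_p$. Because ideals of $\OO_K/\mathcal I$ are totally ordered, it suffices to check this for the unique maximal one $\mathcal{J}=\pi_K^{m-1}\OO_K$, which reduces to a single valuation computation showing $v_K(\delta_{\mathcal I})+(m-1) < v_K(\mathfrak{d}_{K/\Q_p}^{-1}) + ea$, i.e. the shift $\delta_{\mathcal I}$ is genuinely one step short of killing the pairing. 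An alternative, shorter packaging: apply \cref{prop:quotient_induced} with $A=\OO_K$, $f$ a generator of $\mathcal{I}$, and $g=\pi_K$ repeatedly, bootstrapping from the base case $\OO_K/\pi_K\OO_K$ (a finite field, hence symmetric) up to $\OO_K/\mathcal{I}$ — but \cref{prop:quotient_induced} runs the other way (from $A/f$ to $A/g$ for $g\mid f$), so the right statement to invoke is: $\OO_K$ is a symmetric $\Z_p$-algebra (shown above), hence by the local analog of \cref{prop:quotient_induced} every quotient $\OO_K/\mathcal I$ is a symmetric $\Z_p$-algebra, hence by \cref{lem:induced} (with $n=p^{a_0}$ for the relevant exponent and the observation that the characteristic of $\OO_K/\mathcal I$ is exactly $p^a$) a symmetric $\Z/p^a$-algebra.

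I expect the main obstacle to be the bookkeeping in the second paragraph: getting the functional to descend to $\OO_K/\mathcal{I}$ while remaining non-degenerate on that specific quotient, which forces the precise valuation-matching between $v_K(\delta_{\mathcal I})$, the exponent $m$ of $\mathcal I$, the different exponent, and the exponent $a$ extracted from $\mathcal I\cap\Z_p$. The input that makes it all go through is the non-degeneracy of the trace form for the separable extension $K/\Q_p$ (equivalently $\mathfrak{d}_{K/\Q_p}\neq 0$); everything else is the order-theoretic simplicity of ideals in a DVR plus the reduction lemmas \cref{lem:induced} and \cref{prop:quotient_induced} already proved. A secondary subtlety worth stating carefully is why the characteristic of $\OO_K/\mathcal I$ equals $p^a$ with $a$ as defined — this is immediate from $\mathcal I\cap\Z_p = p^a\Z_p$ and the fact that $\Z_p\hookrightarrow\OO_K$ makes $\Z/p^a = \Z_p/(\mathcal I\cap\Z_p)\hookrightarrow \OO_K/\mathcal I$ — and this is exactly what licenses viewing the quotient as a $\Z/p^a$-algebra in the first place.
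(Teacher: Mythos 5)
Your route is genuinely different from the paper's and, once the bookkeeping is straightened out, it works. The paper observes that $p^a\in\mathcal I$, so $\mathcal I=(f)$ with $f\mid p^a$ in the DVR $\OO_K$, and invokes \cref{prop:quotient_induced} to reduce everything to the single quotient $\OO_K/p^a$; there it uses monogenicity $\OO_K=\Z_p[\alpha]$ and takes the ``top coefficient'' functional $\sum a_i\alpha^i\mapsto a_{m-1}$, with non-degeneracy checked as in the cited gcd-graphs paper. Your construction via the trace form is more intrinsic: twisting $\Tr_{K/\Q_p}$ by a generator of the inverse different and then by a power of the uniformizer chosen to match valuations produces a functional directly on $\OO_K/\mathcal I$, needs no monogenicity, and makes visible why the coefficient ring is exactly $\Z/p^a$ with $a=\lceil m/e\rceil$; the price is the explicit valuation computation that the paper's reduction sidesteps. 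Your reduction of non-degeneracy to the single ideal $\pi_K^{m-1}\OO_K$ is legitimate (it is the \emph{minimal} nonzero ideal of the quotient, not the maximal one, but since the ideals are totally ordered any nonzero ideal in the kernel would contain it, so checking it suffices).

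Two slips to repair. First, the extra twist $\delta_{\mathcal I}=\pi_K^{c}$ must have $c$ \emph{minimal}, not maximal, subject to $\lambda_0(\delta_{\mathcal I}\mathcal I)\subseteq p^a\Z_p$: since $\lambda_0(\pi_K^{k}\OO_K)=p^{\lfloor k/e\rfloor}\Z_p$, increasing $c$ only pushes the image deeper into $p^a\Z_p$, so no maximal choice exists, whereas the minimal choice $c=ea-m\ (\ge 0$ because $a=\lceil m/e\rceil)$ gives $\lambda_0(\pi_K^{c}\mathcal I)=p^a\Z_p$ and $\lambda_0(\pi_K^{c}\pi_K^{m-1}\OO_K)=p^{a-1}\Z_p\not\subseteq p^a\Z_p$, which is exactly the tightness you are after. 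Second, the ``alternative packaging'' at the end does not go through as stated: \cref{prop:quotient_induced} requires the quotient $A/f$ to be finite and passes from $A/f$ down to $A/g$ for $g\mid f$, and \cref{lem:induced} enlarges the coefficient ring $\Z/n\hookrightarrow\Z/m$ rather than shrinking it, so neither applies verbatim to the infinite ring $\OO_K$ viewed as a $\Z_p$-algebra. The direct construction in your second paragraph, with the sign of the optimization corrected, is the version to keep.
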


\begin{proof}
Since $\OO_K$ is a DVR, $\mathcal{I}$ is a principal ideal; namely $\mathcal{I} = \langle f \rangle $ for some $f \in \OO_K.$ We then have, $\OO_K/\mathcal{I} \cong \OO_K/f.$   By \cref{prop:quotient_induced}, it is enough to show that $\OO_K/p^a$ is a symmetric $\Z/p^a$-algebra. It is well-known that $\OO_K$ is monogenic over $\Z_p$, i.e, there is $\alpha \in \OO_K$ such that $\OO_K = \Z_p[\alpha]$ (see \cite[Chapter 3, Section 6, Proposition 12]{serre2013local}). Let $m = [K:\Q_p]$, then every element in $\OO_K/p^a$ can be written uniquely as $\sum_{i=0}^{m-1} a_i \alpha^i$ where $\alpha_i \in \Z_p/p^a = \Z/p^a.$ We can define a linear functional $ \psi\colon \OO_K/p^a \to \Z/p^a$ by 
    \[ \psi\left(\sum_{i=0}^{m-1} a_i \alpha^{i} \right)= a_{m-1}.\]
    By an identical argument for the proof of \cite[Proposition 6.7]{minavc2024gcd}, we can check that $\psi$ is non-degenerate and hence $\OO_K/p^a$ is a symmetric  $\Z/p^a$-algebra. 
\end{proof}

\begin{prop}
Let $K$ be a local field of characteristics $p$ and $\OO_K$ its ring of integers. For each  non-zero ideal $ \mathcal{I} \subset \OO_K$, $\OO_K/\mathcal{I}$ is a symmetric $\F_p$-algebra. 
\end{prop}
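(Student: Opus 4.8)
The plan is to mirror the local (mixed-characteristic) case treated in \cref{lem:local}, replacing $\Q_p$ by the local function field $\F_p((t))$ and using the equal-characteristic analogue of the structural facts invoked there. Let $K$ be a local field of characteristic $p$, so $K \cong \F_q((t))$ for some $q = p^f$, with ring of integers $\OO_K \cong \F_q[[t]]$. This is a complete DVR, hence every non-zero ideal $\mathcal{I} \subset \OO_K$ is principal, say $\mathcal{I} = \langle f \rangle$; consequently $\OO_K/\mathcal{I} \cong \OO_K/f$ is a finite $\F_p$-algebra. By \cref{prop:quotient_induced} applied to the integral domain $A = \OO_K$ (taking $g$ to be a power of the uniformizer $t$ dividing $f$), it suffices to prove that $\OO_K/t^a$ is a symmetric $\F_p$-algebra for every $a \geq 1$.

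For that final step I would run the same argument as in \cref{lem:local}: $\OO_K = \F_q[[t]]$ is monogenic over $\F_p[[t]]$ (equivalently, $\F_q$ is monogenic over $\F_p$, generated by a primitive element $\alpha$ of the residue extension), so every element of $\OO_K/t^a$ can be written uniquely as $\sum_{i=0}^{m-1} c_i(t)\,\alpha^i$ with $c_i(t) \in (\F_p[[t]])/t^a$ and $m = f$; one then defines the linear functional $\psi\left(\sum_i c_i(t)\alpha^i\right) = $ (coefficient of $\alpha^{m-1} t^{a-1}$), or more robustly combines a trace-type functional on $\F_q/\F_p$ with the "top coefficient" functional on $(\F_p[[t]])/t^a$, exactly as in \cite[Proposition 6.7]{minavc2024gcd}, and checks non-degeneracy. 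Alternatively, and perhaps more cleanly, one can bypass the monogenic description entirely: $\OO_K/t^a$ is a free $\F_p[t]/t^a$-module, and $\F_p[t]/t^a$ is itself a symmetric $\F_p$-algebra (it is the quotient $\F_p[t]/t^a$ of the PID $\F_p[t]$, which is symmetric by the polynomial-ring results recalled from \cite{minavc2024gcd}); then a symmetric algebra base-changed along a finite free extension equipped with a non-degenerate trace form is again symmetric. The cleanest route is to observe $\OO_K/t^a \cong (\F_p[t]/t^a)[G]$-like behaviour is not quite available, so I would default to the explicit functional.

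The main obstacle is essentially bookkeeping rather than conceptual: one must confirm that the equal-characteristic structure theory ($\OO_K \cong \F_q[[t]]$, monogenicity of $\OO_K$ over $\F_p[[t]]$, principality of ideals) holds, and that the non-degeneracy verification of \cite[Proposition 6.7]{minavc2024gcd} transfers verbatim — it does, since that argument only used that $\OO_K$ is a DVR with finite monogenic residue-type structure over the base, a property shared by $\F_q[[t]]$ over $\F_p[[t]]$. A secondary subtlety is matching the "characteristic" bookkeeping: here every finite quotient $\OO_K/\mathcal{I}$ has characteristic $p$ (not a higher prime power), so the base ring is simply $\F_p$ throughout and \cref{lem:induced}/\cref{lem:product_coefficients} are not even needed. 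Hence the proof reduces cleanly to: reduce to $\OO_K/t^a$ via \cref{prop:quotient_induced}, then exhibit the explicit non-degenerate $\F_p$-linear functional as in \cref{lem:local}.
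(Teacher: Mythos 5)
Your proposal is correct and follows essentially the same route as the paper: identify $\OO_K$ with $\F_q[[t]]$, observe that every non-zero ideal is generated by a power of $t$ so that the quotient is $\F_q[t]/t^a$, and invoke the symmetric-algebra result for $\F_q[t]/t^a$ from \cite{minavc2024gcd} (the paper simply cites Corollary 6.8 there rather than re-deriving the explicit functional as you sketch). Your detour through \cref{prop:quotient_induced} is unnecessary --- and as phrased you apply it in the wrong direction, since that lemma passes from $A/f$ symmetric to $A/g$ symmetric for $g \mid f$ --- but this is harmless here because in the DVR $\F_q[[t]]$ any generator $f$ of $\mathcal{I}$ is a unit times $t^{v(f)}$, so $\mathcal{I} = \langle t^a \rangle$ outright.
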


\begin{proof}
    A local field of characteristic $p$ is isomorphic to the Laurent series $\F_q((t))$ for some $q$. As a result, $\OO_K \cong \F_q[[t]]$. We can then find $a$ such that $\mathcal{I} = \langle t^a \rangle.$ We then have $\F_q[[t]]/\mathcal{I} \cong \F_{q}[[t]]/t^a \cong \F_q[t]/t^a.$ By \cite[Corollary 6.8]{minavc2024gcd}, $\F_q[t]/t^a$ is a symmetric $\F_p$-algebra. 
\end{proof}
\begin{thm} \label{thm:number_field}
Let $K$ be a number field, $\mathcal{I} \subset \OO_K$ be a non-zero ideal. Let $n$ be the positive integer such that 
\[ n \Z = \OO_K \cap \mathcal{I}.\]
Then $\OO_K/\mathcal{I}$ is a symmetric $\Z/n$-algebra. 
\end{thm}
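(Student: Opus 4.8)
The plan is to reduce the global statement about a number field $K$ to the local statements already established, using the Chinese Remainder Theorem together with Lemmas~\ref{prop:product}, \ref{lem:product_coefficients}, \ref{lem:induced}, and~\ref{lem:local}. First I would factor the ideal $\mathcal{I}$ into prime powers in the Dedekind domain $\OO_K$, say $\mathcal{I} = \prod_{i} \p_i^{e_i}$ with the $\p_i$ distinct primes lying over rational primes $p_i$. By CRT we get a ring isomorphism $\OO_K/\mathcal{I} \cong \prod_i \OO_K/\p_i^{e_i}$, and regrouping the factors according to which rational prime they sit over, $\OO_K/\mathcal{I} \cong \prod_p \big(\prod_{\p_i \mid p} \OO_K/\p_i^{e_i}\big)$.

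Next I would identify each local factor with a quotient of a $p$-adic ring of integers. For a prime $\p$ of $\OO_K$ over $p$ with residue degree and ramification controlled by the completion, the canonical map $\OO_K \to \OO_{K_\p}$ (where $K_\p$ is the completion) induces $\OO_K/\p^{e} \cong \OO_{K_\p}/\widehat{\p}^{\,e}$, and $\OO_{K_\p}$ is exactly the ring of integers in a finite extension of $\Q_p$. So Lemma~\ref{lem:local} applies: writing $\widehat{\p}^{\,e} \cap \Z_p = p^{a_\p}\Z_p$, the ring $\OO_K/\p^{e}$ is a symmetric $\Z/p^{a_\p}$-algebra. Now I would package the factors sharing a common $p$: for each rational prime $p$ dividing $\mathcal{I}$, let $a_p = \max_{\p \mid p} a_\p$; then by Lemma~\ref{lem:induced} each $\OO_K/\p^{e_\p}$ with $\p \mid p$ is a symmetric $\Z/p^{a_p}$-algebra, and by (iterating) Lemma~\ref{prop:product} the product $B_p := \prod_{\p \mid p} \OO_K/\p^{e_\p}$ is a symmetric $\Z/p^{a_p}$-algebra. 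Finally, the rational primes dividing $\mathcal{I}$ are pairwise coprime, so Lemma~\ref{lem:product_coefficients} (applied inductively over the distinct $p$'s) shows that $\prod_p B_p \cong \OO_K/\mathcal{I}$ is a symmetric $\Z/m$-algebra where $m = \prod_p p^{a_p}$.

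It then remains to check that $m$ equals the integer $n$ defined by $n\Z = \OO_K \cap \mathcal{I}$. Here I would argue $n$-adically: for each $p$, the exact power of $p$ dividing $n$ is the largest $b$ with $p^b \in \mathcal{I}$, equivalently $v_\p(p^b) \ge e_\p$ (i.e. $e_\p \le b \cdot v_\p(p)$) for all $\p \mid p$, which after completing at each $\p$ is precisely the condition that $p^b \in \widehat{\p}^{\,e_\p}$ for all $\p\mid p$ — that is, $b \le a_\p$ for all $\p$, so $b = \min$... wait, one must be careful: $p^b \in \mathcal{I}$ requires $p^b \in \p^{e_\p}$ for \emph{all} $\p \mid p$, so the maximal such $b$ is $\min_{\p \mid p} a_\p$, whereas above I set $a_p = \max$. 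I would resolve this discrepancy by instead using $a_p := $ the $p$-adic valuation of $n$, noting that Lemma~\ref{lem:induced} lets us promote every $\OO_K/\p^{e_\p}$ to a symmetric $\Z/p^{a_p}$-algebra provided $p^{a_\p} \mid p^{a_p}$, i.e. $a_\p \le a_p$ — but that fails if $a_p = \min < a_\p$. The clean fix: apply Lemma~\ref{lem:local} to the \emph{whole} local factor at once. That is, the completion of $\OO_K/\mathcal{I}$ at $p$ is $\OO_K \otimes_\Z \Z_p$ modulo $\mathcal{I}\otimes\Z_p$, which is an order-type quotient; more simply, $B_p = \OO_{K\otimes\Q_p}/(\mathcal{I})$ and $\OO_{K\otimes\Q_p} = \prod_{\p\mid p}\OO_{K_\p}$, and $(\mathcal{I}\cap \Z)\otimes\Z_p = p^{a_p}\Z_p$ with $a_p = v_p(n)$ by construction of $n$. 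So Lemma~\ref{lem:local}, or rather its product form via Lemma~\ref{prop:product}, directly gives that $B_p$ is a symmetric $\Z/p^{a_p}$-algebra with $a_p = v_p(n)$, and then Lemma~\ref{lem:product_coefficients} assembles these into a symmetric $\Z/n$-algebra.

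The main obstacle, as the above vacillation shows, is purely bookkeeping: correctly matching up the local exponents $a_\p$ (from Lemma~\ref{lem:local} applied prime-by-prime) with the global integer $n$, and deciding whether to invoke Lemma~\ref{lem:local} on individual primes $\p$ (then needing Lemma~\ref{lem:induced} to unify coefficient rings, which forces care about $\min$ versus $\max$) or on the full semilocal ring $\OO_K\otimes\Z_p$ at once (cleaner, since then the relevant intersection with $\Z_p$ is $p^{v_p(n)}\Z_p$ by definition). I would adopt the semilocal approach to sidestep the issue entirely. No deep input is needed beyond CRT, the structure of ideals in Dedekind domains, compatibility of completion with quotients, and the already-proven Lemmas~\ref{prop:product}--\ref{lem:local}.
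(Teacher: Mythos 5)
Your proposal is correct and follows essentially the same route as the paper: factor $\mathcal{I}$ into prime powers, use CRT to reduce to the completions $\OO_{K_{\p}}/\widehat{\p}^{\,e_\p}$, apply \cref{lem:local} to each, and reassemble with \cref{lem:induced}, \cref{prop:product}, and \cref{lem:product_coefficients}. Your mid-proof worry about min versus max is a false alarm: since $p^b \in \p^{e_\p}$ exactly when $b \ge a_\p$, membership of $p^b$ in all the $\p^{e_\p}$ with $\p \mid p$ forces $b \ge \max_{\p} a_\p$, so $v_p(n) = \max_{\p \mid p} a_\p$ and your original choice $a_p = \max$ was already the correct one (and is what \cref{lem:induced} needs to put all factors over a common coefficient ring).
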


\begin{proof}
    Let $\mathcal{I} = \prod_{i=1}^d P_i^{e_i}$ be the factorization of $\mathcal{I}$ into the product of distinct prime ideals in $\OO_K.$ By the Chinese remainder theorem 
    \[ \OO_K/ \mathcal{I} \cong \prod_{i=1}^d \OO_K/P_i^{e_i} = \prod_{i=1}^d (\OO_K)_{P_i}/P^{e_i}(\OO_K)_{P_i},\]
    where $(\OO_K)_{P_i}$ is the completion of $\OO_K$ at $P_i.$ By \cref{prop:product}, \cref{lem:product_coefficients} and \cref{lem:induced} and \cref{lem:local}, we conclude that $\OO_K/\mathcal{I}$ is a symmetric $\Z/n$-algebra.
\end{proof}
For function fields, we have an analogous statement. 

\begin{thm} \label{thm:function_fields}
Let $K$ be a finite extension of $\F_p(t)$ and $\OO_K$ its ring of integers. For each non-zero ideal $\mathcal{I} \subset \OO_K$, $\OO_K/\mathcal{I}$ is a symmetric $\F_p$-algebra. 
\end{thm}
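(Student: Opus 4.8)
The plan is to mirror the argument used for number fields in \cref{thm:number_field}, substituting the function-field analogue of \cref{lem:local} at the local step. First I would factor the ideal $\mathcal{I} = \prod_{i=1}^d P_i^{e_i}$ into distinct primes of $\OO_K$ and invoke the Chinese remainder theorem to write
\[ \OO_K/\mathcal{I} \cong \prod_{i=1}^d \OO_K/P_i^{e_i} \cong \prod_{i=1}^d (\OO_K)_{P_i}/P_i^{e_i}(\OO_K)_{P_i}, \]
so that it suffices, by \cref{prop:product}, to show that each local factor $(\OO_K)_{P_i}/P_i^{e_i}(\OO_K)_{P_i}$ is a symmetric $\F_p$-algebra. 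Since $K$ is a finite extension of $\F_p(t)$, each completion $K_{P_i}$ is a local field of characteristic $p$, hence isomorphic to $\F_q((u))$ for a suitable uniformizer $u$ and residue field $\F_q$; the preceding proposition (on $\OO_K/\mathcal{I}$ for $K$ a local field of characteristic $p$) then says precisely that $\F_q[[u]]/\langle u^{e_i}\rangle \cong \F_q[u]/u^{e_i}$ is a symmetric $\F_p$-algebra. Taking the product over $i$ via \cref{prop:product} finishes the argument.

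In slightly more detail, the one point requiring care is that the $\Z/n$-algebra bookkeeping from the number-field case collapses here because everything is already an $\F_p$-algebra: each local factor is symmetric over $\F_p$ on the nose, so there is no need for \cref{lem:induced} or \cref{lem:product_coefficients} to reconcile different characteristics — $\gcd$ considerations are vacuous since all factors have characteristic exactly $p$. One should note that $\OO_K$ here means the integral closure of $\F_p[t]$ in $K$ (a Dedekind domain), so the factorization of $\mathcal{I}$ into primes and the identification of $P_i$-adic completions with $\F_q[[u]]$ are standard; alternatively, by \cref{prop:quotient_induced} applied with $A = (\OO_K)_{P_i}$ (a DVR, hence a PID, so $P_i^{e_i}$ is principal), one reduces directly to the uniformizer-power quotient and then to \cite[Corollary 6.8]{minavc2024gcd}.

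I do not expect a genuine obstacle: this is essentially the characteristic-$p$ shadow of \cref{thm:number_field}, and the hard analytic content (non-degeneracy of the functional $\sum a_i u^i \mapsto a_{m-1}$ on a DVR quotient) has already been isolated in \cite[Corollary 6.8]{minavc2024gcd} and reused in the local-field proposition above. The only thing to be careful about is correctly identifying the residue field $\F_q$ at each prime — $q$ may vary with $i$ — and making sure the chosen functional is $\F_p$-linear, which it is since $\F_q \supset \F_p$ and the construction is additive. With those checks in place, the product of symmetric $\F_p$-algebras is symmetric over $\F_p$ by \cref{prop:product}, completing the proof.
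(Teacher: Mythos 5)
Your proposal is correct and is precisely the argument the paper intends: the paper omits the proof of \cref{thm:function_fields}, stating only that it is ``analogous'' to \cref{thm:number_field}, and the intended analogue is exactly your route --- factor $\mathcal{I}$, apply the Chinese remainder theorem, identify each local factor with a quotient of $\F_q[[u]]$ via the preceding proposition (which rests on \cite[Corollary 6.8]{minavc2024gcd}), and combine with \cref{prop:product}. Your observation that \cref{lem:induced} and \cref{lem:product_coefficients} are not needed here, since every factor already has characteristic exactly $p$, is also correct.
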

By \cref{thm:main} and \cref{thm:function_fields}, we have the following corollary which answers a question posed in \cite[Remark 6.22]{minavc2024gcd}.  
\begin{cor}
Let $K$ be a finite extension of $\F_p(t)$ and $\OO_K$ its ring of integers. Let $\mathcal{I} \subset \OO_K$ be a non-zero ideal and $R=\OO_K/\mathcal{I}$. Let $S \subset R \setminus \{0 \}$ be a subset such that $S=-S$. Then $\Gamma(R,S)$ is an integral graph if and only if $S \cup \{0 \}$ is an $\F_p$-vector subspace of $R.$
\end{cor}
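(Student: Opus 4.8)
The plan is to obtain this corollary by specializing \cref{thm:main} to the rings produced by \cref{thm:function_fields}, and then rephrasing the resulting condition in linear-algebraic terms. By \cref{thm:function_fields}, $R=\OO_K/\mathcal{I}$ is a symmetric $\F_p$-algebra, i.e.\ a symmetric $\Z/p$-algebra; so \cref{thm:main} applies with $n=p$ and gives that $\Gamma(R,S)$ is integral if and only if $S$ is stable under the action of $(\Z/p)^{\times}=\F_p^{\times}$ on $R$. Here the action is just multiplication, which makes sense because $R$ has characteristic $p$ and hence contains a copy of $\F_p$ as its prime subring. So the corollary reduces to the purely combinatorial claim: a subset $S\subseteq R\setminus\{0\}$ is $\F_p^{\times}$-stable if and only if $W:=S\cup\{0\}$ is an $\F_p$-vector subspace of $R$.

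The ``if'' part is routine: if $W$ is an $\F_p$-subspace then $aW=W$ for every $a\in\F_p^{\times}$, and since each $a$ is a unit of $R$ the map $r\mapsto ar$ fixes $0$ and permutes $W\setminus\{0\}=S$, so $S$ is $\F_p^{\times}$-stable. For the converse I would first compute the orbits: for $r\neq 0$ the map $a\mapsto ar$ is injective on $\F_p$ — if $ar=br$ with $a\ne b$ then $(a-b)r=0$ with $a-b$ a unit of $R$, forcing $r=0$ — so the $\F_p^{\times}$-orbit of $r$ is the ``punctured line'' $\F_p r\setminus\{0\}$. Hence an $\F_p^{\times}$-stable $S$ is precisely a union of punctured lines through the origin, so $W=S\cup\{0\}$ is a union of $\F_p$-lines through $0$; equivalently, $W$ is closed under scalar multiplication by all of $\F_p$.

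The remaining step — promoting ``$W$ is a union of lines through the origin'' to ``$W$ is a subspace'', i.e.\ establishing that $W$ is also closed under addition — is the one I expect to be the main obstacle. For an arbitrary $\F_p$-vector space this implication is false (a union of two distinct lines through the origin is never a subspace), so the argument must use genuine structure of $R=\OO_K/\mathcal{I}$ rather than just its additive/scalar structure. The natural thing to try is to exploit the ring structure: write $R\cong\prod_i \OO_K/P_i^{e_i}$ via the Chinese remainder theorem, reduce to a single finite local $\F_p$-algebra, and there try to use the action of the unit group on the powers of the maximal ideal to force the additive closure. Carrying this out — and, along the way, checking that the characterization really does hold at this level of generality — is the heart of the proof, and is where I would concentrate the work.
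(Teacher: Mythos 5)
Your reduction is exactly the route the paper intends: \cref{thm:function_fields} makes $R=\OO_K/\mathcal{I}$ a symmetric $\F_p$-algebra, \cref{thm:main} (with $n=p$) then says $\Gamma(R,S)$ is integral if and only if $S$ is stable under $\F_p^{\times}$, and your computation of the orbits as punctured lines $\F_p r\setminus\{0\}$ is correct. You have also put your finger on precisely the right spot: the only remaining step is to pass from ``$S\cup\{0\}$ is a union of $\F_p$-lines through $0$'' to ``$S\cup\{0\}$ is an $\F_p$-subspace,'' and you rightly flag that this does not follow from the scalar action alone. The paper supplies no argument here --- the corollary is asserted as an immediate consequence of the two theorems --- so there is nothing for you to have missed on that front.

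The real problem is that this last step is not merely hard but false, so the program you sketch in your final paragraph (CRT plus the unit-group action to force additive closure) cannot succeed, and the corollary as stated fails. Take $K=\F_2(t)$, $\OO_K=\F_2[t]$, $\mathcal{I}=(t^2+t)$, so that $R\cong\F_2\times\F_2$, and let $S=\{(1,0),(0,1)\}$. Then $S=-S$ and $S$ is (trivially) stable under $\F_2^{\times}=\{1\}$, so $\Gamma(R,S)$ is integral by \cref{thm:main}; indeed it is the $4$-cycle, with spectrum $\{2,0,0,-2\}$. Yet $S\cup\{0\}$ is not a subspace, since $(1,0)+(0,1)=(1,1)\notin S\cup\{0\}$. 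The same phenomenon occurs for every $p$: in $R\cong\F_p\times\F_p$ the union of the two coordinate lines, with the origin removed, is $\F_p^{\times}$-stable and symmetric but its union with $\{0\}$ is not closed under addition. What \cref{thm:main} and \cref{thm:function_fields} actually yield is that $\Gamma(R,S)$ is integral if and only if $S\cup\{0\}$ is closed under scalar multiplication by $\F_p$ (equivalently, $S$ is a union of punctured lines through the origin); only the ``if'' direction of the equivalence as printed is correct. Your writeup would be right to stop at the scalar-closure characterization and to record the counterexample rather than to attempt the additive closure.
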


\subsection{Integral generalized Paley graphs}
Let $\OO_K$ be the ring of integers of a number field $K.$ Let $\mathcal{I}$ be a non-zero ideal of $\OO_K$. Suppose  $\chi\colon (\OO_{K}/\mathcal{I})^{\times} \to \C^{\times}$ is a character such that $\chi(-1)=1$. The following definition is motivated by the definition of the quadratic Paley graphs as defined and studied in \cite{minac2023paley} and \cite[Section 4.2]{chudnovsky2024prime}. 
\begin{definition}
The Paley graph $P_{\chi}$ associated with $\chi$ is defined to be the Cayley graph $\Gamma(\OO_K/\mathcal{I}, \ker(\chi))$ where 
\[ \ker(\chi) = \{a \in (\OO_K/I)^{\times}| \chi(a)=1. \} \]
\end{definition}

By  \cref{thm:number_field} and \cref{thm:main} , we have the following. 

\begin{prop}
    $P_{\chi}$ is an integral graph if and only if the induced Dirichlet character of $\chi$ on $(\Z/n)^{\times}$ is trivial. Here $n$ is the positive integer such that 
    \[ n \Z = \Z \cap \mathcal{I}. \]
\end{prop}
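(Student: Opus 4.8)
The plan is to reduce the statement to \cref{thm:main}. By \cref{thm:number_field}, $R:=\OO_K/\mathcal{I}$ is a symmetric $\Z/n$-algebra for precisely the integer $n$ with $n\Z=\Z\cap\mathcal{I}$, so $P_\chi=\Gamma(R,\ker(\chi))$ is integral if and only if $S:=\ker(\chi)$ is stable under the multiplication action of $(\Z/n)^\times$ on $R$. (Note that $0\notin S$ and $S=-S$ because $\chi(-1)=1$, so $P_\chi$ is a legitimate undirected Cayley graph; in any case \cref{thm:main} is phrased for arbitrary $S$.) Thus everything comes down to rephrasing the stability condition in terms of $\chi$.

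Next I would set up the relevant group-theoretic picture. The structure map $\Z\to R$ has kernel $\Z\cap\mathcal{I}=n\Z$, hence induces an injection $\Z/n\hookrightarrow R$; restricting to unit groups gives an injective homomorphism $(\Z/n)^\times\hookrightarrow(\OO_K/\mathcal{I})^\times$, whose image I will call $H$. The action of $a\in(\Z/n)^\times$ on $R$ is multiplication by the image of $a$ in $R$, i.e.\ by the corresponding element of $H$; in particular it preserves the unit group, so it maps $S$ into $(\OO_K/\mathcal{I})^\times$. Since $S=\ker(\chi)$ is a subgroup of $(\OO_K/\mathcal{I})^\times$ and $H$ consists of units, the orbits of $(\Z/n)^\times$ meeting $(\OO_K/\mathcal{I})^\times$ are exactly the cosets of $H$, and a subgroup $S$ containing $1$ is a union of such cosets precisely when $H\subseteq S$. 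Hence $S$ is stable under $(\Z/n)^\times$ if and only if $H\subseteq S$.

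Finally I would observe that $H\subseteq S=\ker(\chi)$ says exactly that the character $a\mapsto\chi(\overline a)$ of $(\Z/n)^\times$ obtained by pulling $\chi$ back along $(\Z/n)^\times\hookrightarrow(\OO_K/\mathcal{I})^\times$ — which is the meaning of ``the induced Dirichlet character of $\chi$ on $(\Z/n)^\times$'' — is the trivial character. Combining this with the reduction of the first paragraph yields the proposition.

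I do not expect a genuine obstacle here: the argument is a direct application of \cref{thm:main} together with elementary coset bookkeeping. The only steps needing a line of justification are the injectivity of $(\Z/n)^\times\to(\OO_K/\mathcal{I})^\times$ (immediate from $n\Z=\Z\cap\mathcal{I}$) and the unwinding of the definition of the induced Dirichlet character; the potential worry about multiplication by zero divisors does not arise since both $\ker(\chi)$ and $H$ lie in the unit group.
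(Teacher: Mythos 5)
Your argument is correct and follows exactly the route the paper intends: the paper gives no explicit proof, merely citing \cref{thm:number_field} and \cref{thm:main}, and your reduction plus the coset bookkeeping (stability of the subgroup $\ker(\chi)$ under the image $H$ of $(\Z/n)^\times$ is equivalent to $H\subseteq\ker(\chi)$, i.e.\ to triviality of the induced Dirichlet character) is precisely the unwritten verification. Nothing is missing.
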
 

\begin{rem} \label{rem:future_work}
    There are various constructions of $\chi$ with the property that the induced Dirichlet character on $(\Z/n)^{\times}$ is trivial. For example, let us consider $K=\Z[i]$. For each $n \in \Z$, let $\chi_n: (\Z[i]/n)^{\times} \to \C^{\times}$ be the quartic residue symbol associated with $n$ (see \cite[Chatper 9]{ireland2013classical}). Then, by \cite[Proposition 9.8.3]{ireland2013classical}, we know that $\chi_n(a)=1 $ for all $a \in (\Z/n)^{\times}.$ As a result, the Paley graph $P_{\chi}$ is integral. 
    It seems to be interesting to study some further arithmetic properties of the eigenvalues of $P_{\chi}$ in this case.  For example, one may ask whether it is possible to calculate the spectrum of $P_{\chi}$ explicitly (the case in which $\chi$ is a quadratic Dirichlet character has been solved in \cite{minac2023paley}.) Furthermore, since $P_{\chi}$ is an integral graph, it is a promising candidate for the existence of perfect state transfers (see \cite{bavsic2013characterization, godsil2012state}). We, therefore, wonder whether it is possible to classify $P_{\chi}$ which admit perfect state transfers. We hope to discuss these questions in future work. 
\end{rem}

\section*{Acknowledgements}
We thank Minh-Tam Trinh for introducing us to symmetric algebras and for some clarifications on Hecke characters. We also thank  J\'an Min\'a\v{c} for his constant encouragement and support.  Finally, we thank the anonymous referee for their help in improving the quality and clarity of this paper.

\end{document}